\newcommand{\Hmm}[1]{\leavevmode{\marginpar{\tiny%
$\hbox to 0mm{\hspace*{-0.5mm}$\leftarrow$\hss}%
\vcenter{\vrule depth 0.1mm height 0.1mm width \the\marginparwidth}%
\hbox to 0mm{\hss$\rightarrow$\hspace*{-0.5mm}}$\\\relax\raggedright #1}}}
\newcommand{\ggrad}{{\rm{grad}}}
\newtheorem{thm}{Theorem}[section]
\newtheorem{pro}[thm]{Proposition}
\theoremstyle{definition}
\newtheorem{rem}[thm]{Remark}
\newcommand{\R}{{\mathbb R}}
\newcommand{\N}{{\mathbb N}}
\newcommand{\lm}{{\lambda}}
\newcommand{\vol 	}{{\mathrm{vol}}}
\begin{document}
\title[Eigenvalue bounds for non-compact manifolds]{ A note on eigenvalue bounds for non-compact manifolds}

\author[M. Keller]{Matthias Keller}
\address[M. Keller]{Universit\"at Potsdam, Institut f\"ur Mathematik, 14476  Potsdam, Germany} \email{matthias.keller@uni-potsdam.de}

\author[S. Liu]{Shiping Liu}
\address[S. Liu]{School of Mathematical Sciences, University of Science and Technology of China,
	Hefei 230026, Anhui Province, China} \email{
	spliu@ustc.edu.cn}

\author[N.~Peyerimhoff]{Norbert Peyerimhoff}
\address[N.~Peyerimhoff]{Department of Mathematical Sciences, Durham
	University, Science Laboratories South Road, Durham, DH1 3LE, UK}
\email{norbert.peyerimhoff@durham.ac.uk}

\begin{abstract}
  In this article we prove upper bounds for the Laplace eigenvalues
  $\lambda_k$ below the essential spectrum for strictly negatively curved
  Cartan-Hadamard manifolds. Our bound is given in terms of $k^2$ and
  specific geometric data of the manifold. This applies also to the
  particular case of non-compact manifolds whose sectional curvature
  tends to $-\infty$, where no essential spectrum is present due to a
  theorem of Donnelly/Li. The result stands in clear contrast to
  Laplacians on graphs where such a bound fails to be true in
  general.
\end{abstract}

\subjclass[2010]{58J50 and 35P20}
%\date{\today}
\maketitle
%\tableofcontents

\section{Introduction}
In 1979 Donnelly and Li \cite{DL} proved a criterion for discrete spectrum of the Laplacian on Riemannian manifolds in terms of decreasing sectional curvature. This complemented a result by Weyl for Schr\"odinger operators with increasing potential.

In particular, let $ M $ be a complete Riemannian manifold and $ \Delta $ be the Laplacian. We denote by $ K_{r} $ the supremum of the sectional curvatures at points \emph{outside} of $ B_{r}(x_{0}) $, the ball of radius $ r $ about some arbitrary base point $ x_{0} $, that is
\begin{equation} \label{eq:Kr} 
K_r := \sup \{ K(\sigma) \mid \text{$\sigma \subset T_pM$ two-dim. subspace, $p \in M \backslash B_{r}(x_{0})$} \}. 
\end{equation}
Then the theorem of Donnelly/Li reads as follows.

\begin{thm}[Donnelly/Li]
	Let $ M $ be a complete simply connected negatively curved Riemannian manifold. If $ K_{r}\to-\infty $ as $ r\to\infty $, then $ \Delta $ has purely discrete spectrum.
\end{thm}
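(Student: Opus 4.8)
The plan is to establish the stronger assertion that the essential spectrum of $\Delta$ is empty, via Persson's theorem (the decomposition principle), which for the Laplacian on a complete manifold reads
\[
\min\sigma_{\mathrm{ess}}(\Delta)\;=\;\lim_{R\to\infty}\lambda_{0}\bigl(M\setminus\overline{B_{R}(x_{0})}\bigr),\qquad
\lambda_{0}(\Omega):=\inf\Bigl\{\textstyle\frac{\int_{\Omega}|\nabla f|^{2}}{\int_{\Omega}f^{2}}\ :\ 0\neq f\in C_{c}^{\infty}(\Omega)\Bigr\}.
\]
Hence it suffices to show that $\lambda_{0}(M\setminus\overline{B_{R}(x_{0})})\to\infty$ as $R\to\infty$; then $\sigma_{\mathrm{ess}}(\Delta)\subseteq[\Lambda,\infty)$ for every $\Lambda>0$, so $\sigma_{\mathrm{ess}}(\Delta)=\emptyset$ and the spectrum is purely discrete.

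For the geometric input, note first that by the Cartan--Hadamard theorem the distance function $r:=d(x_{0},\cdot)$ is smooth on $M\setminus\{x_{0}\}$ and the geodesic spheres $\partial B_{s}(x_{0})$ are convex, i.e. their shape operators (with respect to the outward normal $\nabla r$) are nonnegative. Fix $\Lambda>0$ and choose $\kappa\ge1$ so large that $\tfrac{1}{16}(n-1)^{2}\kappa^{2}>\Lambda$, where $n=\dim M$. Since $K_{r}\to-\infty$, there is an $R_{0}$ with $K_{R_{0}}\le-\kappa^{2}$, i.e. all sectional curvatures outside $B_{R_{0}}(x_{0})$ are $\le-\kappa^{2}$. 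Along any unit-speed geodesic from $x_{0}$ the shape operators of the spheres obey the matrix Riccati equation, and the curvature bound $\le-\kappa^{2}$ on $\{r\ge R_{0}\}$ together with nonnegativity of the shape operator at $r=R_{0}$ gives, by Riccati comparison, that every eigenvalue of the shape operator at radius $r\ge R_{0}$ is $\ge\kappa\tanh(\kappa(r-R_{0}))$; taking traces,
\[
\Delta r\;\ge\;(n-1)\,\kappa\tanh\bigl(\kappa(r-R_{0})\bigr)\qquad\text{on }M\setminus\overline{B_{R_{0}}(x_{0})}
\]
(here $\Delta r$ denotes the trace of the Hessian of $r$, i.e. the mean curvature of the geodesic sphere through the point). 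In particular, setting $R:=R_{0}+1$, we get $\Delta r\ge(n-1)\kappa\tanh\kappa\ge\tfrac12(n-1)\kappa$ on $M\setminus\overline{B_{R}(x_{0})}$.

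With this uniform lower bound in hand I would run a McKean-type estimate. Let $f\in C_{c}^{\infty}(M\setminus\overline{B_{R}(x_{0})})$; replacing $f$ by $|f|$ (and treating real and imaginary parts separately) we may assume $f\ge0$. Since $r$ is smooth on $\supp f$, integration by parts gives
\[
\tfrac{1}{2}(n-1)\kappa\int f^{2}\;\le\;\int f^{2}\,\Delta r\;=\;-\int\nabla(f^{2})\cdot\nabla r\;=\;-2\int f\,\nabla f\cdot\nabla r\;\le\;2\int|f|\,|\nabla f|,
\]
using $|\nabla r|\equiv1$. Young's inequality $2|f||\nabla f|\le a f^{2}+a^{-1}|\nabla f|^{2}$ with $a=\tfrac14(n-1)\kappa$ then yields $\int|\nabla f|^{2}\ge\tfrac{1}{16}(n-1)^{2}\kappa^{2}\int f^{2}>\Lambda\int f^{2}$. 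Hence $\lambda_{0}(M\setminus\overline{B_{R}(x_{0})})\ge\Lambda$, and since $\Lambda$ was arbitrary, Persson's theorem finishes the proof.

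I expect the only genuinely delicate point to be producing the \emph{uniform} positive lower bound for $\Delta r$ on the exterior region $M\setminus\overline{B_{R}(x_{0})}$: near $x_{0}$ one only knows that the curvature is nonpositive, so the Riccati comparison must be started from the (possibly small) convexity bound at radius $R_{0}$, and one must check that the comparison solution has relaxed sufficiently close to its equilibrium $(n-1)\kappa$ already at radius $R_{0}+1$ — which is precisely why $\kappa$ is taken large. Everything else (Persson's theorem, the Riccati/Laplacian comparison theorem, the integration by parts, which is legitimate since $r$ is smooth on the support of $f$) is classical.
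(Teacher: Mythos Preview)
The paper does not contain a proof of this statement; the Donnelly/Li theorem is quoted from \cite{DL} as background and motivation for the main result (Theorem~\ref{t:main}), and no argument for it is given in the paper itself. So there is no ``paper's own proof'' to compare against.

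That said, your proposal is a correct and essentially classical proof, and in fact close in spirit to the original argument of Donnelly and Li. The three ingredients --- Persson's decomposition principle reducing the problem to $\lambda_0(M\setminus\overline{B_R})\to\infty$, a Riccati/Laplacian comparison producing a uniform lower bound $\Delta r\ge c(R)$ with $c(R)\to\infty$, and the McKean-type integration-by-parts estimate converting this into a Rayleigh-quotient bound --- are exactly the standard machinery for results of this type. Your handling of the one genuinely delicate point (that the strong curvature bound only holds outside $B_{R_0}$, so the Riccati comparison must be restarted at $R_0$ using merely the convexity $S(R_0)\ge0$ inherited from the Cartan--Hadamard structure) is correct: the matrix Riccati comparison against the scalar solution $\kappa\tanh(\kappa(r-R_0))$ with zero initial data is precisely what is needed, and the choice $\kappa\ge1$ ensures $\tanh\kappa>\tfrac12$ so that one extra unit of radius suffices. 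Minor remark: you need not pass to $|f|$; working directly with $|f|^2=u^2+v^2$ for complex $f=u+\rmi v$ keeps everything smooth and gives the same inequality via $|\nabla(|f|^2)|\le 2|f|\,|\nabla f|$.
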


In this note we give an upper bound on the eigenvalues $ \lm_{k} $ (listed with increasing order and counting multiplicities) in terms of $ k^{2} $ and specific geometric data of the manifold.  While this bound is a classical result in the case of compact manifolds, it stands in clear contrast to case of Laplacians on graphs. Indeed, for graphs any asymptotics of eigenvalues can occur, see e.g. \cite{BGK15}. 

Our result is based on so-called \emph{improved Cheeger inequalities} which were introduced in the setting of finite graphs in \cite{KLLOT13}. A dimen\-sion-free version of these improved Cheeger inequalities in the manifold setting was derived in \cite{Liu14} to prove an eigenvalue ratio result for closed weighted manifolds of \emph{non-negative Bakry-\'Emery curvature}. In this article, we discuss an application in the case of \emph{negative curvature}: we use an adaption of the improved Cheeger inequalities for general non-closed manifolds (Theorem \ref{p:bound}) to derive the following result on eigenvalues below the essential spectrum for strictly negatively curved Cartan-Hadamard manifolds:  

\begin{thm}\label{t:main}
	Let $ M $ be a complete simply connected Riemannian manifold 
	with strictly negative curvature, that is $K_{0} < 0$ (with $K_r$ defined
	in \eqref{eq:Kr}). Then, we have for all $ L^{2} $-eigenvalues 
	$ \lm_{0}< \lm_{1}\leq\ldots$ of the Laplacian below the essential
	spectrum
	$$
	\lm_{k} \leq \frac{128 \mu^{2}}{|K_{0}|(\dim(M)-1)^{2}} k^{2}, 
	\qquad k\ge 1,
	$$
	where  
	$$
	\mu=\inf_{r,s>0,x\in M}\frac{\vol(B_{r+s}(x)\setminus B_{s}(x))}{r^{2}\vol 	(B_{s}(x))}. 
	$$
\end{thm}

\begin{rem} 
%The condition $K_0 < 0$ implies that the bottom of the spectrum of $M$ is bounded below by $\frac{|K_0|(\dim(M)-1)^2}{4} > 0$ (see \cite{McKean}).
Using the result of Cheng \cite{Cheng}, one can obtain a different upper bound as follows. For a ball $ B_r(x) \subset M $ with lower Ricci curvature bound larger than $(n-1)R $ with $R<0 $, and   $n=\dim (M) $, Cheng obtains for the Dirichlet eigenvalues of this ball
$$
\lm_{k}(B_r(x))\leq \frac{n^2}{4}|R|+\frac{(1+\pi^2)(1+2^{4n})}{r^2} k^{2}
$$
for odd dimensions and an estimate with somewhat better constants for the even-dimensional case and all $k \ge 0$, see \cite[Corollary~2.3]{Cheng} and \cite[Theorem~7, Chapter III]{Chavel}. (Note that Cheng proves this result for closed manifolds but his arguments work also without modification in the case of the compact manifold $B_r(x)$ with Dirichlet boundary conditions. Note also that under the assumptions of Theorem \ref{t:main}, we have $R \le K_0$.)
By domain monotonicity,  \cite[Corollary~1, Chapter I]{Chavel}, we have
for all eigenvalues $\lambda_k(M)$ of $M$ below the essential spectrum
\begin{align*}
\lm_{k}(M)\leq \lm_{k}(B_r(x)).
\end{align*}
This yields an  upper estimate with different geometric constants.
\end{rem}

We introduce the following notation. For a Riemannian manifold $ M $ let $ \vol $ be its volume measure and $ d $ the Riemannian distance. For a Borel set $ A\subseteq M $ the boundary measure $ \vol^{+}(A) $ is defined as
$$
\vol^{+}(A)=\liminf_{r\to 0}\frac{\vol(O_{r}(A))-\vol(A)}{r},
$$
where $ O_{r}(A)=\{x\in M\mid d(x,a)\leq r\mbox{ for some } a\in A\} $. If $ A $ has positive volume and finite boundary measure, we let
$$
\phi(A)=\frac{\vol^{+}(A)}{\vol(A)}
$$
and $ \phi(A)=\infty $ otherwise. The Cheeger constant of a non-compact Riemannian manifold $M$ is defined as (see \cite[p. 95]{Chavel})
$$
h=h(M)=\inf_{A\subseteq M} \phi(A).
$$

We deduce the theorem above from the following result for general manifolds which was shown in the setting of closed manifolds, \cite[Theorem~1.6]{Liu14}. The basic idea of the proof is an extension of the methods of \cite[Lemma 4, Proposition 2]{KLLOT13} developed for finite graphs to prove the so-called improved Cheeger inequalities.

\begin{thm}\label{p:bound}
Let $ M $ be a complete Riemannian manifold. Then, we have for all 
$ L^{2} $-eigenvalues $ \lm_{0}\, \leq \lm_{1} \leq\, \ldots$ of the Laplacian below the essential spectrum
$$
{h^{2}}\lm_{k}\leq 128 k^{2}\lm_{0}^{2} ,\qquad  k\ge1.
$$
\end{thm}

The proof of the theorem is based on an estimate which was proven for compact manifolds in \cite[Theorem~3.1]{Liu14}. Although the proof carries over directly we recall the proof here for the convenience of the reader.
To this end let $ f\ge0 $ be a function  on $ M $ that is supported on a set of positive measure and define let
$$  \phi(f)=\inf_{t\ge0}\phi(M_{f}(t)),  $$
where
$ M_{f}(t)= \{x\in M\mid f(x)>t\}$ is the level set of $ f $ for $ t\in\R$.
Furthermore, we denote the $ L^{p} $ norm by $ \|\cdot\|_{p} $ for $ p\in[1,\infty] $.

\begin{pro}[Non-compact version of Theorem~3.1 \cite{Liu14}]	
Let $ M $ be a complete Riemannian manifold with $ L^{2} $-eigenvalues 
$$ \lm_{0}\, \leq\,  \lm_{1}\leq\ldots $$ 
of the Laplacian below the essential spectrum and let $ f\ge 0 $ be a bounded Lipshitz function in $ L^{2}(M) $. Then,
$$
\phi(f)\leq 8\sqrt{2}\frac{k }{\sqrt{\lm_{k}}}\frac{\||\nabla f|\|^{2}_{2}}{\| f\|^{2}_{2}},\qquad  k\ge1.
$$	
\end{pro}

\begin{proof}
Here we sketch the core arguments of the proof. For more details we refer 
the reader to  \cite{Liu14}. We assume    $|\nabla f|\in L^{2}(M) $ since otherwise the asserted inequality is trivial.

For a finite set $ \theta\subset\R $, let $ \psi_{\theta}:\R\to \R $ 
be defined by
$$
\psi_{\theta}(s)=\arg\min_{t\in \theta}|s-t|,
$$
$\eta_\theta: \R\to \R$ be defined by
$$
\eta_{\theta}(s)=|s-\psi_{\theta}(s)|,
$$
and $ \eta_{\theta,f}:M\to[0,\infty) $
$$  
\eta_{\theta,f}= \eta_{\theta} \circ f = |f-\psi_{\theta}\circ f|
$$
be the difference of $ f $ and its approximation $ \psi_{\theta}\circ f $. 
Note that we have $0 \le \eta_{\theta,f} \le f$.
   
Now, fix $ k\in\N $ for the rest of the proof and let $ t_{0}=0 $. Assume   
$ t_{0}<t_{1}<\ldots<t_{j-1}$ are given. If there is $ t\ge t_{j-1} $ such that
\begin{equation} \label{eq:relevant}
\| \eta_{\{t_{j-1},t\},f}1_{f^{{-1}}  ((t_{j-1},t])} \|^{2}_{2} =
\frac{1}{k\lm_{k}}\||\nabla f|\|^{2}_{2} =:C_{0},
\end{equation}
then let $ t_{j} $ be the smallest such $ t\ge t_{j-1} $. Otherwise, let 
$ t_{j}=\|f\|_{\infty}$. 
%Denote the smallest $ j $ such that $  t_{j}=\|f\|_{\infty} $ by $ j_{0} $ and $ j_{0}=\infty $ otherwise. Below we will show that $ j_{0}\leq 2k $.
%  Clearly, $ t\mapsto c_{j}(t) $ is a bounded monotone increasing function.
Observe that  
$$
f_{j}=  \eta_{\{t_{j-1},t_{j}\},f}1_{f^{{-1}}  ((t_{j-1},t_{j}])},\qquad j\ge 1,
$$
are positive disjointly supported Lipshitz functions which are trivial whenever 
$ t_{j}=\|f\|_{\infty}$. Moreover, $ f_{j}\in L^{2} $ since $0 \le f_j \le f$ and $f \in L^2(M)$. Furthermore, by the reverse triangle inequality we have 
$ | f_{j} (x)-  f_{j}(y) |\leq |f(x) -f(y)|$, $ x,y\in M $. Therefore, as the supports 
of the $ f_{j} $ are disjoint, we obtain  
$$
\sum_{j=1}^{\infty}|\nabla f_{j}|^{2}\leq |\nabla f|^{2}   
$$
and therefore, $ |\nabla f_{j}|\in L^{2}(M) $ whenever $ |\nabla f|\in  L^{2}(M) $.   By completeness of the Riemannian manifold, the Laplacian is essentially selfadjoint. Thus, the $ f_{j} $'s are included in the form domain of the Laplacian since $ f_{j},|\nabla f_{j}|\in L^{2}(M) $.    
We show the following claim.
  
\smallskip
  
\emph{Claim:} $ t_{2k}=\|f\|_{\infty} $.  

\smallskip

In the case $ t_{2k}<\|f\|_{\infty} $, we infer by the arguments above and by the fact that in this case $ \| f_{j}\|^{2}_{2} = C_{0}$  for all $ j=1,\ldots,2k $
$$
\sum_{j=1}^{2k} \frac{\||\nabla f_{j}|\|^{2}_{2} }{\| f_{j}\|^{2}_{2} }\leq \frac{1}{C_{0}} \||\nabla f	|\|^{2}_{2}=k\lm_{k}.
$$
By the assumption $ t_{2k}<\|f\|_{\infty} $, the functions $ f_{j} $ are non-zero and therefore non-constant. Thus, there exist at least $ k+1 $ of the $ f_{j} $'s such that 
$$
\frac{\||\nabla f_{j}|\|^{2}_{2}}{\| f_{j}\|^{2}_{2}}\leq \lm_{k}.
$$
Hence, the inequality  above for $ k+1 $ orthogonal functions stands in contradiction the Min-Max-Principle and the claim is proven.

\smallskip
  
So let $\theta=\{ 0=t_{0}<t_{1}\leq \ldots\leq t_{2k}=\|f\|_{\infty}\} $.
By \eqref{eq:relevant} and what we have shown above, we obtain
\begin{equation}\label{ie1}
\|f-\psi_{\theta}\circ f\|^{2}_{2} =\sum_{j=1}^{2k}\| \eta_{\{t_{j},t\},f}1_{f^{{-1}}  ((t_{j},t])}\|^{2}_{2}\leq \frac{2}{\lm_{k}}\||\nabla f|\|^{2}_{2} .
\end{equation}
In order to estimate  the $ L^{2} $ norm of $ f-\psi_{\theta}\circ f $ from below, we observe  that the function $ h:M\to\R $
$$
h(x)=\int_{0}^{f(x)}\eta_{\theta}(t)dt
$$
has the same level sets as $ f $ and therefore,
$$
\phi(f)=\phi(h) \leq \frac{\||\nabla h|\|_{1}}{\|h\|_{1}},
$$
where the last inequality follows from the area formula and the co-area inequality \cite[Lemma~3.2]{BH97} (for more details see \cite[Lemma 2.4]{Liu14}). Firstly, we find by the fundamental theorem of calculus and the chain rule and secondly by the Cauchy-Schwarz inequality and 
$ \eta_{\theta,f}=f-\psi_{\theta}\circ f $ that
$$
\||\nabla h|\|_{1}=\||\nabla f|(\eta_{\theta}\circ f)\|_{1}\leq  
\||\nabla f|\|_{2}\|f-\psi_{\theta}\circ f|\|_{2}.
$$
Thirdly, is it elementary to estimate 
$$
h\ge \frac{1}{8k}f^{2}
$$
by choosing $ t_{j}\leq f(x)\leq t_{j+1} $ for $ x \in M$ and estimating
\begin{align*}
h(x)&\ge\frac{1}{4}\left( \sum_{i=0}^{j-1}{(t_{i+1}-t_{i})^{2}} +
{(f(x)-t_{j})^{2}}\right)\\
&\ge\frac{1}{8k}\left( \sum_{i=0}^{j-1}{(t_{i+1}-t_{i})} +{(f(x)-t_{j})}\right)^{2}=\frac{1}{8k}f(x)^{2}.
\end{align*}
These considerations together with \eqref{ie1}  yield
$$
\phi(f)\leq \frac{\||\nabla h|\|_{1}}{\|h\|_{1}}\leq 8k 
\frac{\||\nabla f|\|_{2}\|f-\psi_{\theta}\circ f|\|_{2}}{\|f\|_{2}^{2}}
\leq 8\sqrt{2}\frac{k }{\sqrt{\lm_{k}}}\frac{\||\nabla f|\|_{2}^{2}}{\|f\|_{2}^{2}},
$$
which finishes the proof.  
\end{proof}

With the help of this proposition we are now in the position to prove Theorem~\ref{p:bound}.
 
\begin{proof}[Proof of Theorem~\ref{p:bound}]
We observe that for any $ n $ we have
$$
\phi(f)\leq \phi(f\wedge n),
$$
where $ f\wedge n =\min\{f,n\}$. Moreover, by the proposition above we have
$$
\phi(f\wedge n)\leq 8\sqrt{2}\frac{k }{\sqrt{\lm_{k}}}\frac{\int_{M}|\nabla f\wedge n|^{2}d\vol}{\int_{M}|f\wedge n|^{2}d\vol}.
$$
Since $ \phi(f)\leq \phi(f\wedge n) $, $ |\nabla (f\wedge n)|\leq |\nabla f| $ and $ \int_{M} |f\wedge n|^{2}d\vol  \to\int_{M}|f|^{2}d\vol=1$, $ n\to\infty $, we conclude
$$
\phi(f)\leq 8\sqrt{2}\frac{k }{\sqrt{\lm_{k}}}\frac{\int_{M}|\nabla f|^{2}d\vol}{\int_{M}|f|^{2}d\vol}	.
$$
We choose $f$ to be an  eigenfunction to $ \lm_{0} $. Then, $ f $ is a Lipshitz function in $ L^2(M) $ with a definite sign which can be chosen to be positive. Then, by the definition of the Cheeger constant and the proposition above, we have
$$
h\sqrt{\lm_{k}}\leq  8\sqrt{2}k \lm_{0},
$$
which finishes the proof.
\end{proof}

\begin{proof}[Proof of Theorem~\ref{t:main}]
Let us first derive $h^2 \ge (\dim(M)-1)^{2}|K_{0}|$: In the
definition of the Cheeger constant, we can restrict ourselves to
sets $A$ with smooth boundary. Let $A \subset M$ be such a set,
$\widehat x \in M$ be a point with positive distance to $A$, and
$d_{\widehat x}: M \to [0,\infty)$ be the distance function to
$\widehat x$. Then $d_{\widehat x}$ is a smooth function on $A$
(since the exponential map $\exp_{\widehat x}: T_{\widehat x}M \to M$
is a diffeomorphism). By the Laplacian Comparison Theorem (see, e.g.,
\cite[(3)]{Kura}), we have
$$
\Delta_M\, d_{\widehat x}(x) \ge (\dim(M)-1) \sqrt{-K_{0}}
\coth(\sqrt{-K_{0}} d_{\widehat x}(x)).
$$
This implies that $\Delta_M\, d_{\widehat x}(x) \ge (\dim(M)-1) \sqrt{|K_{0}|}$
for all $x \in A$ and, therefore, on the one hand,
$$ \int_A \Delta_M\, d_{\widehat x}\, d\vol \ge (\dim(M)-1)\, \sqrt{|K_{0}|}\, \vol(A), $$
and, on the other hand, using the Gau{\ss} Divergence Theorem,
$$ \int_A \Delta_M\, d_{\widehat x}\, d\vol = \int_{\partial A} \langle \ggrad\,
d_{\widehat x}, \nu \rangle\, d\vol_{\partial A} \le \vol^+(A), $$
where $\nu$ is the outward unit normal vector of $\partial A$. Combining
both inequalities leads to the proof of the above estimate of the Cheeger
constant $h$.

Furthermore, let $ \eta=(1-d(\cdot,B_{s}(x))/r)_{+} $. Then,
\begin{align*}
\lm_{0}&\leq \frac{\int_{M}|\nabla \eta|^{2}d\vol}{\int_{M}|\eta|^{2}d\vol}\\
&=\frac{\vol (B_{r+s}(x)\setminus B_{s}(x))} {r^{2}(\vol(B_{s}(x))+
\int_{ (B_{r+s}(x))\setminus B_{s}(x)}(r-d(y,B_{s}(x))^{2}d\vol(y))}\\
&\leq \frac{\vol (B_{r+s}(x)\setminus B_{s}(x))} {r^{2}\vol(B_{s}(x))}.
\end{align*}
Hence, combining this with Proposition~\ref{p:bound} we conclude the statement of the theorem.	
\end{proof}

\textbf{Acknowledgement}.  The authors enjoyed the hospitality of
TSIMF where this work was realized. MK acknowledges the financial
support of the German Science Foundation (DFG). The authors are also
grateful to Gerhard Knieper and the anonymous referees for helpful comments.

\newcommand{\etalchar}[1]{$^{#1}$}
\def\cprime{$'$} \def\cprime{$'$}

\end{document}